\documentclass[a4paper,centertags,fleqn,reqno,12pt]{amsart}
\usepackage{amsfonts}
\usepackage[left=1.5cm, bottom=3cm]{geometry}
\usepackage{amssymb,amsmath,latexsym}

\setcounter{MaxMatrixCols}{10}

\newtheorem{thm}{Theorem}
\newtheorem{lem}{Lemma}

\theoremstyle{definition}
\newtheorem{dfn}{Definition}
\theoremstyle{remark}
\numberwithin{equation}{section}
\allowdisplaybreaks

\begin{document}
\title[New subclass of $p$-valently close-to-convex functions]{Certain
properties of a new subclass of analytic and $p$-valently close-to-convex
functions}
\author{Serap BULUT}
\address{Kocaeli University, Faculty of Aviation and Space Sciences,
Arslanbey Campus, 41285 Kocaeli, Turkey}
\email{serap.bulut@kocaeli.edu.tr}

\begin{abstract}
In the present paper we introduce and investigate an interesting subclass $%
\mathcal{K}_{s}^{\left( k\right) }\left( \gamma ,p\right) $ of analytic and $%
p$-valently close-to-convex functions in the open unit disk $\mathbb{U}$.
For functions belonging to this class, we derive several properties as the
inclusion relationships and distortion theorems. The various results
presented here would generalize many known recent results.
\end{abstract}

\subjclass[2010]{Primary 30C45; Secondary 30C80}
\keywords{Analytic functions, $p$-valently close-to-convex functions, $p$%
-valently starlike functions, inclusion relationships, distortion and growth
theorems, subordination principle.}
\maketitle

\section{Introduction}

Let $\mathcal{A}_{p}$ denote the class of all functions of the form%
\begin{equation}
f\left( z\right) =z^{p}+\sum_{n=1}^{\infty }a_{n+p}z^{n+p}\qquad \left( p\in
\mathbb{N}:=\left\{ 1,2,\ldots \right\} \right)  \label{1.1}
\end{equation}%
which are analytic in the open unit disk%
\begin{equation*}
\mathbb{U}=\left\{ z:z\in \mathbb{C}\quad \text{and}\quad \left\vert
z\right\vert <1\right\} .
\end{equation*}%
In particular, we write $\mathcal{A}_{1}=\mathcal{A}.$

For two functions $f$ and $\Theta $, analytic in $\mathbb{U},$ we say that
the function $f$ is subordinate to $\Theta $ in $\mathbb{U},$ and write%
\begin{equation*}
f\left( z\right) \prec \Theta \left( z\right) \qquad \left( z\in \mathbb{U}%
\right) ,
\end{equation*}%
if there exists a Schwarz function $\omega ,$ which is analytic in $\mathbb{U%
}$ with%
\begin{equation*}
\omega \left( 0\right) =0\qquad \text{and\qquad }\left\vert \omega \left(
z\right) \right\vert <1\quad \left( z\in \mathbb{U}\right)
\end{equation*}%
such that%
\begin{equation*}
f\left( z\right) =\Theta \left( \omega \left( z\right) \right) \quad \left(
z\in \mathbb{U}\right) .
\end{equation*}%
Indeed, it is known that%
\begin{equation*}
f\left( z\right) \prec \Theta \left( z\right) \quad \left( z\in \mathbb{U}%
\right) \Rightarrow f\left( 0\right) =\Theta \left( 0\right) \quad \text{%
and\quad }f\left( \mathbb{U}\right) \subset \Theta \left( \mathbb{U}\right) .
\end{equation*}%
Furthermore, if the function $\Theta $ is univalent in $\mathbb{U},$ then we
have the following equivalence%
\begin{equation*}
f\left( z\right) \prec \Theta \left( z\right) \quad \left( z\in \mathbb{U}%
\right) \Leftrightarrow f\left( 0\right) =\Theta \left( 0\right) \quad \text{%
and\quad }f\left( \mathbb{U}\right) \subset \Theta \left( \mathbb{U}\right) .
\end{equation*}

A function $f\in \mathcal{A}_{p}$ is said to be $p$-valently starlike of
order $\gamma \,\left( 0\leq \gamma <p\right) $ in $\mathbb{U}$ if it
satisfies the inequality%
\begin{equation*}
\Re \left( \frac{zf^{\prime }\left( z\right) }{f\left( z\right) }\right)
>\gamma \qquad \left( z\in \mathbb{U}\right)
\end{equation*}%
or equivalently%
\begin{equation*}
\frac{zf^{\prime }\left( z\right) }{f\left( z\right) }\prec \frac{p+\left(
p-2\gamma \right) z}{1-z}\qquad \left( z\in \mathbb{U}\right) .
\end{equation*}%
The class of all $p$-valent starlike functions of order $\gamma $ in $%
\mathbb{U}$ is denoted by $\mathcal{S}_{p}^{\ast }\left( \gamma \right) .$
Also, we denote that%
\begin{equation*}
\mathcal{S}_{p}^{\ast }\left( 0\right) =\mathcal{S}_{p}^{\ast },\qquad
\mathcal{S}_{1}^{\ast }\left( \gamma \right) =\mathcal{S}^{\ast }\left(
\gamma \right) \qquad \text{and}\qquad \mathcal{S}_{1}^{\ast }\left(
0\right) =\mathcal{S}^{\ast }.
\end{equation*}

A function $f\in \mathcal{A}_{p}$ is said to be $p$-valently close-to-convex
of order $\gamma \,\left( 0\leq \gamma <p\right) $ in $\mathbb{U}$ if $g\in
\mathcal{S}_{p}^{\ast }\left( \gamma \right) $\ and satisfies the inequality%
\begin{equation*}
\Re \left( \frac{zf^{\prime }\left( z\right) }{g(z)}\right) >\gamma \qquad
\left( z\in \mathbb{U}\right)
\end{equation*}%
or equivalently%
\begin{equation*}
\frac{zf^{\prime }\left( z\right) }{g(z)}\prec \frac{p+\left( p-2\gamma
\right) z}{1-z}\qquad \left( z\in \mathbb{U}\right) .
\end{equation*}%
The class of all $p$-valent close-to-convex functions of order $\gamma $ in $%
\mathbb{U}$ is denoted by $\mathcal{K}_{p}\left( \gamma \right) .$ Also, we
denote that%
\begin{equation*}
\mathcal{K}_{p}\left( 0\right) =\mathcal{K}_{p},\qquad \mathcal{K}_{1}\left(
\gamma \right) =\mathcal{K}\left( \gamma \right) \qquad \text{and}\qquad
\mathcal{K}_{1}\left( 0\right) =\mathcal{K}.
\end{equation*}

We now introduce the following subclass of analytic functions:

\begin{dfn}
\label{def1} Let the function $f$ be analytic in $\mathbb{U}$ and defined by
$\left( \ref{1.1}\right) .$ We say that $f\in \mathcal{K}_{s}^{\left(
k\right) }\left( \gamma ,p\right) ,$ if there exists a function $g\in
\mathcal{S}_{p}^{\ast }\left( \frac{\left( k-1\right) p}{k}\right) $ $(k\in
\mathbb{N}$ is a fixed integer$)$ such that
\begin{equation}
\Re \left( \frac{{z^{\left( k-1\right) p+1}}f^{\prime }(z)}{{g_{k}(z)}}%
\right) >\gamma \quad \left( z\in \mathbb{U};\;0\leq \gamma <p\right) ,
\label{1.4}
\end{equation}%
where $g_{k}$ is defined by the equality
\begin{equation}
g_{k}(z)=\prod_{\nu =0}^{k-1}\varepsilon ^{-\nu p}g(\varepsilon ^{\nu
}z),\quad \varepsilon =e^{2\pi i/k}.  \label{1.5}
\end{equation}
\end{dfn}

By simple calculations we see that the inequality $\left( \ref{1.4}\right) $
is equivalent to%
\begin{equation}
\left\vert \frac{{z^{\left( k-1\right) p+1}}f^{\prime }(z)}{{g_{k}(z)}}%
-p\right\vert <\left\vert \frac{{z^{\left( k-1\right) p+1}}f^{\prime }(z)}{{%
g_{k}(z)}}+p-2\gamma \right\vert .  \label{1.6}
\end{equation}

\noindent \textbf{Remark }$\mathbf{1}$\textbf{. }$\mathbf{(i)}$ For $p=1$,
we get the class $\mathcal{K}_{s}^{\left( k\right) }\left( \gamma ,1\right) =%
\mathcal{K}_{s}^{\left( k\right) }\left( \gamma \right) \,\left( 0\leq
\gamma <1\right) $ recently studied by \c{S}eker \cite{seker}.

\noindent $\mathbf{(ii)}$ For $p=1$ and $k=2$, we have the class $\mathcal{K}%
_{s}^{\left( 2\right) }\left( \gamma ,1\right) =\mathcal{K}_{s}\left( \gamma
\right) \,\left( 0\leq \gamma <1\right) $ introduced and studied by
Kowalczyk and Le\'{s}-Bomba \cite{kow}.

\noindent $\mathbf{(iii)}$ For $p=1$, $k=2$ and $\gamma =0$, we have the
class $\mathcal{K}_{s}^{\left( 2\right) }\left( 0,1\right) =\mathcal{K}_{s}$
introduced and studied by Gao and Zhou \cite{gao}.

In this work, by using the principle of subordination, we obtain inclusion
theorem and distortion theorems for functions in the function class $%
\mathcal{K}_{s}^{\left( k\right) }\left( \gamma ,p\right) .$ Our results
unify and extend the corresponding results obtained by \c{S}eker \cite{seker}%
, Kowalczyk and Le\'{s}-Bomba \cite{kow}, and Gao and Zhou \cite{gao}.

\section{Preliminary Lemmas}

We assume throughout this paper that $\mathcal{P}$ denote the class of
functions $p$ of the form%
\begin{equation*}
p\left( z\right) =1+p_{1}z+p_{2}z^{2}+\cdots \qquad \left( z\in \mathbb{U}%
\right)
\end{equation*}%
which are analytic in $\mathbb{U}$ and $k\in \mathbb{N}$ is a fixed integer.

In order to prove our main results for the functions class $\mathcal{K}%
_{s}^{\left( k\right) }\left( \gamma ,p\right) ,$ we need the following
lemmas.

\begin{lem}
\label{lemma1}If%
\begin{equation}
g(z)=z^{p}+\sum_{n=1}^{\infty }b_{n+p}z^{n+p}\in \mathcal{S}_{p}^{\ast
}\left( \frac{\left( k-1\right) p}{k}\right) ,  \label{2}
\end{equation}%
then
\begin{equation}
G_{k}(z)=\frac{g_{k}(z)}{z^{\left( k-1\right) p}}=z^{p}+\sum_{n=1}^{\infty
}B_{n+p}z^{n+p}\in \mathcal{S}_{p}^{\ast },  \label{2.1}
\end{equation}%
where $g_{k}$ is given by $\left( \ref{1.5}\right) $.
\end{lem}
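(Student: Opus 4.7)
The plan is to verify the asserted series form of $G_k$ and then compute $zG_k'(z)/G_k(z)$ via logarithmic differentiation, exploiting the fact that the order $(k-1)p/k$ is chosen exactly so that $k$ copies of it sum to $(k-1)p$.

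First I would check the series expansion of $G_k$. Since $g(z)=z^p+\sum_{n\ge 1}b_{n+p}z^{n+p}$, we have $\varepsilon^{-\nu p}g(\varepsilon^{\nu}z)=z^p+\sum_{n\ge 1}b_{n+p}\varepsilon^{\nu n}z^{n+p}$, so each factor in the product \eqref{1.5} starts with $z^p$. Their product is therefore a power series beginning with $z^{kp}$, whose next possible power is $z^{kp+1}$. Dividing by $z^{(k-1)p}$ yields a series of the form $z^p+\sum_{n\ge 1}B_{n+p}z^{n+p}$, as claimed. Moreover, since $g\in\mathcal{S}_p^*((k-1)p/k)$ with $(k-1)p/k\ge 0$, $g(z)/z^p$ is zero-free in $\mathbb{U}$, hence so is each factor of $g_k$, so $G_k(z)/z^p$ is zero-free in $\mathbb{U}$.

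Next, by logarithmic differentiation of \eqref{1.5},
\begin{equation*}
\frac{zg_k'(z)}{g_k(z)}=\sum_{\nu=0}^{k-1}\frac{\varepsilon^{\nu}z\,g'(\varepsilon^{\nu}z)}{g(\varepsilon^{\nu}z)}.
\end{equation*}
Since $G_k(z)=g_k(z)/z^{(k-1)p}$, a direct computation gives
\begin{equation*}
\frac{zG_k'(z)}{G_k(z)}=\frac{zg_k'(z)}{g_k(z)}-(k-1)p=\sum_{\nu=0}^{k-1}\frac{\varepsilon^{\nu}z\,g'(\varepsilon^{\nu}z)}{g(\varepsilon^{\nu}z)}-(k-1)p.
\end{equation*}

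Now I would take real parts. For each $\nu\in\{0,1,\ldots,k-1\}$, the point $\varepsilon^{\nu}z$ lies in $\mathbb{U}$ whenever $z\in\mathbb{U}$, so the hypothesis $g\in\mathcal{S}_p^*\!\left(\tfrac{(k-1)p}{k}\right)$ gives
\begin{equation*}
\Re\!\left(\frac{\varepsilon^{\nu}z\,g'(\varepsilon^{\nu}z)}{g(\varepsilon^{\nu}z)}\right)>\frac{(k-1)p}{k}\qquad(z\in\mathbb{U}).
\end{equation*}
Summing these $k$ inequalities and subtracting $(k-1)p$ produces
\begin{equation*}
\Re\!\left(\frac{zG_k'(z)}{G_k(z)}\right)>k\cdot\frac{(k-1)p}{k}-(k-1)p=0,
\end{equation*}
which is precisely the condition $G_k\in\mathcal{S}_p^*$.

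I expect no serious obstacle; the only subtlety is bookkeeping. The key structural remark is that the order $(k-1)p/k$ in the hypothesis on $g$ is tailored to cancel exactly with the shift $(k-1)p$ coming from dividing $g_k$ by $z^{(k-1)p}$, leaving a strictly positive real part. If I had to flag anything, it would be making sure the invocation of starlikeness at each rotated point $\varepsilon^{\nu}z$ is legitimate, which is immediate since $|\varepsilon^{\nu}z|=|z|<1$.
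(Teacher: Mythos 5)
Your proof is correct and follows essentially the same route as the paper: logarithmic differentiation of $g_k(z)/z^{(k-1)p}$ to get $\frac{zG_k'(z)}{G_k(z)}=\sum_{\nu=0}^{k-1}\frac{\varepsilon^{\nu}zg'(\varepsilon^{\nu}z)}{g(\varepsilon^{\nu}z)}-(k-1)p$, then summing the $k$ starlikeness inequalities of order $(k-1)p/k$ at the rotated points to conclude positivity of the real part. Your extra checks of the series form of $G_k$ and the zero-freeness of the factors are sensible additions that the paper leaves implicit.
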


\begin{proof}
Suppose that%
\begin{equation}
g(z)=z^{p}+\sum_{n=1}^{\infty }b_{n+p}z^{n+p}\in \mathcal{S}_{p}^{\ast
}\left( \frac{\left( k-1\right) p}{k}\right) .  \label{2.a}
\end{equation}%
By $\left( \ref{1.5}\right) $, we have%
\begin{equation}
G_{k}(z)=\frac{g_{k}(z)}{z^{\left( k-1\right) p}}=\frac{\prod_{\nu
=0}^{k-1}\varepsilon ^{-\nu p}g(\varepsilon ^{\nu }z)}{z^{\left( k-1\right)
p}}.  \label{2.b}
\end{equation}%
Differentiating $\left( \ref{2.b}\right) $ logarithmically, we obtain%
\begin{equation}
\frac{zG_{k}^{\prime }(z)}{G_{k}(z)}=\sum_{\nu =0}^{k-1}\frac{\varepsilon
^{\nu }zg^{\prime }(\varepsilon ^{\nu }z)}{g(\varepsilon ^{\nu }z)}-\left(
k-1\right) p.  \label{2.c}
\end{equation}%
From $\left( \ref{2.c}\right) $ together with $\left( \ref{2.a}\right) $, we
get%
\begin{equation*}
\Re \left( \frac{zG_{k}^{\prime }(z)}{G_{k}(z)}\right) =\sum_{\nu
=0}^{k-1}\Re \left( \frac{\varepsilon ^{\nu }zg^{\prime }(\varepsilon ^{\nu
}z)}{g(\varepsilon ^{\nu }z)}\right) -\left( k-1\right) p>0,
\end{equation*}%
which completes the proof of our theorem.
\end{proof}

\begin{lem}
\label{lemma2}Let the function%
\begin{equation*}
H\left( z\right) =p+h_{1}z+h_{2}z^{2}+\cdots \qquad \left( z\in \mathbb{U}%
\right)
\end{equation*}%
be analytic in the unit disk $\mathbb{U}$. Then, the function $H$ satisfies
the condition%
\begin{equation*}
\left\vert \frac{H\left( z\right) -p}{\left( p-2\gamma \right) +H\left(
z\right) }\right\vert <\beta \qquad \left( z\in \mathbb{U}\right)
\end{equation*}%
for some $\beta \;\left( 0<\beta \leq 1\right) $, if and only if there
exists an analytic function $\varphi $ in the unit disk $\mathbb{U}$, such
that $\left\vert \varphi \left( z\right) \right\vert \leq \beta \;\left(
z\in \mathbb{U}\right) $, and%
\begin{equation*}
H\left( z\right) =\frac{p-\left( p-2\gamma \right) z\varphi \left( z\right)
}{1+z\varphi \left( z\right) }\qquad \left( z\in \mathbb{U}\right) .
\end{equation*}
\end{lem}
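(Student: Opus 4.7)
The plan is to prove the two implications separately. Both directions rest on the observation that the Möbius-type expression $(H-p)/\bigl((p-2\gamma)+H\bigr)$ simplifies very cleanly when $H$ is written in the stated form, and conversely can be inverted to recover that form; the Schwarz lemma supplies the passage from the bound on the ratio to the bound on $\varphi$.

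For the sufficiency (``if'') direction, I would start from
\[
H(z)=\frac{p-(p-2\gamma)z\varphi(z)}{1+z\varphi(z)}
\]
and simply compute $H(z)-p$ and $(p-2\gamma)+H(z)$ by reducing to the common denominator $1+z\varphi(z)$. A short cancellation shows that the numerator of $H(z)-p$ is $-2(p-\gamma)z\varphi(z)$ and the numerator of $(p-2\gamma)+H(z)$ is the constant $2(p-\gamma)$, so the ratio collapses to exactly $-z\varphi(z)$. Taking moduli then yields
\[
\left|\frac{H(z)-p}{(p-2\gamma)+H(z)}\right|=|z|\,|\varphi(z)|\le \beta |z|<\beta \qquad (z\in\mathbb{U}),
\]
which is the required inequality.

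For the necessity (``only if'') direction, I would define
\[
\psi(z):=-\frac{H(z)-p}{(p-2\gamma)+H(z)}.
\]
The hypothesis $|\psi(z)|<\beta\le 1$ on $\mathbb{U}$ forces the denominator $(p-2\gamma)+H(z)$ to have no zero in $\mathbb{U}$ (otherwise the modulus of $\psi$ would blow up), so $\psi$ is analytic in $\mathbb{U}$; moreover $H(0)=p$ gives $\psi(0)=0$. By the Schwarz lemma (applied to $\psi/\beta$), the quotient $\varphi(z):=\psi(z)/z$ extends to an analytic function on $\mathbb{U}$ with $|\varphi(z)|\le \beta$. Solving the defining equation of $\psi$ for $H$ — that is, multiplying through by $(p-2\gamma)+H(z)$ and collecting terms — produces
\[
H(z)=\frac{p-(p-2\gamma)\psi(z)}{1+\psi(z)}=\frac{p-(p-2\gamma)z\varphi(z)}{1+z\varphi(z)},
\]
which is the representation claimed.

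There is no real obstacle here; the one point that requires a moment's care is justifying that $(p-2\gamma)+H(z)$ is nonvanishing (and hence $\psi$ is analytic in all of $\mathbb{U}$), which is immediate from the assumed strict inequality, and that $\varphi(z)=\psi(z)/z$ is holomorphic at the origin, which follows from $\psi(0)=0$ together with the standard removable singularity argument underlying the Schwarz lemma.
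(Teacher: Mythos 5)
Your proposal is correct and follows essentially the same route as the paper: define the auxiliary function $h(z)=\frac{p-H(z)}{(p-2\gamma)+H(z)}$ (your $\psi$), apply Schwarz's lemma to write it as $z\varphi(z)$ with $\left\vert \varphi \right\vert \leq \beta$, and invert the Möbius relation for $H$; the converse is the same direct computation giving modulus $\left\vert z\varphi(z)\right\vert \leq \beta\left\vert z\right\vert <\beta$. Your explicit remark that the denominator $(p-2\gamma)+H(z)$ cannot vanish is a small point the paper leaves implicit, but the argument is otherwise identical.
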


\begin{proof}
We will employ the technique similar with those of Padamanabhan \cite{P}.
Assume that the function%
\begin{equation*}
H\left( z\right) =p+h_{1}z+h_{2}z^{2}+\cdots \qquad \left( z\in \mathbb{U}%
\right)
\end{equation*}%
satisfies the condition%
\begin{equation*}
\left\vert \frac{H\left( z\right) -p}{\left( p-2\gamma \right) +H\left(
z\right) }\right\vert <\beta \qquad \left( z\in \mathbb{U}\right) .
\end{equation*}%
Setting%
\begin{equation*}
h\left( z\right) =\frac{p-H\left( z\right) }{\left( p-2\gamma \right)
+H\left( z\right) },
\end{equation*}%
we see that the function $h$ analytic in $\mathbb{U}$, satisfies the
inequality $\left\vert h\left( z\right) \right\vert <\beta $ for $z\in
\mathbb{U}$\ and $h(0)=0$. Now, by using the Schwarz's lemma, we get that
the function $h$ has the form $h\left( z\right) =z\varphi \left( z\right) $,
where $\varphi $ is analytic in $\mathbb{U}$ and satisfies $\left\vert
\varphi \left( z\right) \right\vert \leq \beta $ for $z\in \mathbb{U}$.
Thus, we obtain%
\begin{equation*}
H(z)=\frac{p-\left( p-2\gamma \right) h(z)}{1+h(z)}=\frac{p-\left( p-2\gamma
\right) z\varphi \left( z\right) }{1+z\varphi \left( z\right) }.
\end{equation*}%
Conversely, if%
\begin{equation*}
H\left( z\right) =\frac{p-\left( p-2\gamma \right) z\varphi \left( z\right)
}{1+z\varphi \left( z\right) }
\end{equation*}%
and $\left\vert \varphi \left( z\right) \right\vert \leq \beta $ for $z\in
\mathbb{U}$, then $H$ is analytic in the unit disk $\mathbb{U}$. So we get%
\begin{equation*}
\left\vert \frac{H\left( z\right) -p}{\left( p-2\gamma \right) +H\left(
z\right) }\right\vert =\left\vert z\varphi \left( z\right) \right\vert \leq
\beta \left\vert z\right\vert <\beta \qquad \left( z\in \mathbb{U}\right)
\end{equation*}%
which completes the proof of our lemma.
\end{proof}

\begin{lem}
\label{lemma4}\cite{HOS} A function $p\in \mathcal{P}$ satisfies the
following condition:%
\begin{equation*}
\Re \left( p(z)\right) >0\qquad \left( z\in \mathbb{U}\right)
\end{equation*}%
if and only if%
\begin{equation*}
p(z)\neq \frac{\zeta -1}{\zeta +1}\qquad \left( z\in \mathbb{U};\;\zeta \in
\mathbb{C};\;\left\vert \zeta \right\vert =1\right) .
\end{equation*}
\end{lem}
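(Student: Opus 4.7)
The plan is to exploit the well-known fact that the Möbius transformation $\zeta\mapsto(\zeta-1)/(\zeta+1)$ sends the unit circle onto the extended imaginary axis. Once this geometric observation is in place, the lemma reduces to a short connectedness argument.

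First I would verify the geometric statement: for $|\zeta|=1$ with $\zeta\neq -1$, write $\zeta = e^{i\theta}$ and compute
\begin{equation*}
\frac{\zeta-1}{\zeta+1}=\frac{e^{i\theta}-1}{e^{i\theta}+1}=i\tan(\theta/2),
\end{equation*}
so as $\theta$ ranges over $(-\pi,\pi)$ the image is exactly $i\mathbb{R}$. In particular, $\{(\zeta-1)/(\zeta+1):|\zeta|=1,\,\zeta\neq -1\}$ coincides with the imaginary axis, and the excluded value $\zeta=-1$ only corresponds to the point at infinity, which $p(z)$ never attains anyway since $p$ is analytic in $\mathbb{U}$.

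For the forward implication, I would argue directly: if $\Re p(z)>0$ throughout $\mathbb{U}$, then $p(z)$ never lies on the imaginary axis, so by the previous paragraph $p(z)\neq(\zeta-1)/(\zeta+1)$ for any unimodular $\zeta$. For the converse, I would invoke connectedness. Since $p\in\mathcal{P}$ we have $p(0)=1$, whose real part is positive. The hypothesis says that $p(\mathbb{U})$ misses every point of the form $(\zeta-1)/(\zeta+1)$ with $|\zeta|=1$, hence $p(\mathbb{U})$ is disjoint from the imaginary axis $i\mathbb{R}$. Because $p$ is continuous and $\mathbb{U}$ is connected, the image $p(\mathbb{U})$ is a connected subset of $\mathbb{C}\setminus i\mathbb{R}$; the complement $\mathbb{C}\setminus i\mathbb{R}$ has exactly two components, namely the open half-planes $\{\Re w>0\}$ and $\{\Re w<0\}$. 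Since $1=p(0)\in p(\mathbb{U})$ lies in the right half-plane, the entire image $p(\mathbb{U})$ must lie in the right half-plane, which gives $\Re p(z)>0$ for all $z\in\mathbb{U}$.

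No step here should present any real obstacle; the only subtlety worth flagging is the handling of $\zeta=-1$, where the Möbius map is singular, but as noted above this value is harmlessly absent from $p(\mathbb{U})$ because $p$ is analytic, so no case distinction beyond a brief remark is required.
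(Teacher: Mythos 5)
Your argument is correct and complete. Note that the paper itself offers no proof of this lemma at all: it is simply quoted from the reference [HOS], so there is no internal argument to compare against. Your route --- observing that $\zeta\mapsto(\zeta-1)/(\zeta+1)$ maps the unit circle (minus $\zeta=-1$, which corresponds to the point at infinity and is irrelevant since $p$ is finite-valued) onto the whole imaginary axis, so that the avoidance condition is exactly ``$p(\mathbb{U})\cap i\mathbb{R}=\emptyset$'', and then using connectedness of $p(\mathbb{U})$ together with $p(0)=1$ to place the image in the open right half-plane --- is the standard and natural proof, and it is essentially the argument in [HOS]. You use the normalization $p(0)=1$ (which is all that the paper's definition of $\mathcal{P}$ guarantees) exactly where it is needed, and the strictness of $\Re\,p(z)>0$ comes out correctly because the two components of $\mathbb{C}\setminus i\mathbb{R}$ are open half-planes. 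No gaps.
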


\begin{lem}
\label{lemma5}A function $f\in \mathcal{A}_{p}$ given by $\left( \ref{1.1}%
\right) $ is in the class $\mathcal{K}_{s}^{\left( k\right) }\left( \gamma
,p\right) $ if and only if%
\begin{equation*}
1+\sum_{n=1}^{\infty }A_{n+p}z^{n}\neq 0\qquad \left( z\in \mathbb{U}\right)
,
\end{equation*}%
where%
\begin{equation*}
A_{n+p}=\frac{\left( \zeta +1\right) \left( n+p\right) a_{n+p}+\left(
p-2\gamma -p\zeta \right) B_{n+p}}{2\left( p-\gamma \right) }\qquad \left(
\zeta \in \mathbb{C};\;\left\vert \zeta \right\vert =1\right) .
\end{equation*}
\end{lem}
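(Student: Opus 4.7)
The plan is to convert the geometric condition defining $\mathcal{K}_s^{(k)}(\gamma,p)$ into a Carathéodory-type condition, then apply Lemma \ref{lemma4} and read off the coefficients of the resulting power series.

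First I would use Lemma \ref{lemma1} to absorb the factor $z^{(k-1)p}$: writing $G_k(z)=g_k(z)/z^{(k-1)p}=z^p+\sum_{n=1}^\infty B_{n+p}z^{n+p}$, the defining inequality \eqref{1.4} becomes $\Re\bigl(zf'(z)/G_k(z)\bigr)>\gamma$. Since $G_k\in\mathcal{S}_p^\ast$ by Lemma \ref{lemma1}, $G_k$ vanishes only at $0$, so $H(z):=zf'(z)/G_k(z)$ is analytic on $\mathbb{U}$ with $H(0)=p$. Setting $P(z):=\bigl(H(z)-\gamma\bigr)/(p-\gamma)$ yields $P\in\mathcal{P}$, and the class membership is equivalent to $\Re P(z)>0$ on $\mathbb{U}$.

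Next I would invoke Lemma \ref{lemma4}: $\Re P(z)>0$ on $\mathbb{U}$ if and only if $P(z)\neq(\zeta-1)/(\zeta+1)$ for every $z\in\mathbb{U}$ and every $\zeta\in\mathbb{C}$ with $|\zeta|=1$. Unwinding the substitution gives
\[
H(z)\;\neq\;\gamma+(p-\gamma)\frac{\zeta-1}{\zeta+1}\;=\;\frac{p\zeta-(p-2\gamma)}{\zeta+1},
\]
and clearing denominators against $G_k(z)(\zeta+1)$ this becomes
\[
(\zeta+1)\,zf'(z)\;-\;\bigl[p\zeta-(p-2\gamma)\bigr]\,G_k(z)\;\neq\;0\qquad(z\in\mathbb{U}).
\]
Substituting the series $zf'(z)=pz^p+\sum(n+p)a_{n+p}z^{n+p}$ and $G_k(z)=z^p+\sum B_{n+p}z^{n+p}$, the coefficient of $z^p$ collapses to $(\zeta+1)p-[p\zeta-(p-2\gamma)]=2(p-\gamma)$, while the coefficient of $z^{n+p}$ is exactly $(\zeta+1)(n+p)a_{n+p}+(p-2\gamma-p\zeta)B_{n+p}$. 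Dividing through by $2(p-\gamma)z^p$ produces precisely the series $1+\sum_{n=1}^\infty A_{n+p}z^n\neq 0$ claimed in the statement.

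The only delicate point is the case $\zeta=-1$, where the expression $(\zeta-1)/(\zeta+1)$ is formally infinite; here the displayed non-vanishing condition degenerates to $1+\sum B_{n+p}z^n=G_k(z)/z^p\neq 0$, which is automatic from Lemma \ref{lemma1}. Thus including this value of $\zeta$ in the statement is harmless. I expect the main obstacle to be purely clerical — verifying that clearing the denominator $(\zeta+1)$ does not introduce spurious solutions and that the leading coefficient really simplifies to $2(p-\gamma)$ — both of which hinge on the starlikeness of $G_k$ established in Lemma \ref{lemma1}.
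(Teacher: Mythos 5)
Your proposal is correct and follows essentially the same route as the paper: normalize by passing from $g_k$ to $G_k(z)=g_k(z)/z^{(k-1)p}$, form the Carath\'eodory function $\bigl(zf'(z)/G_k(z)-\gamma\bigr)/(p-\gamma)$, apply Lemma~\ref{lemma4}, clear the denominator $(\zeta+1)$, expand the series, and divide by $2(p-\gamma)z^{p}$, exactly as in the paper's computation where the $z^{p}$-coefficient collapses to $2(p-\gamma)$. Your extra remark disposing of the boundary value $\zeta=-1$ (where the condition reduces to $G_k(z)/z^{p}\neq 0$, automatic by Lemma~\ref{lemma1}) is a small point the paper passes over silently, but it does not change the argument.
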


\begin{proof}
Upon setting%
\begin{equation*}
p(z)=\frac{\frac{{z^{\left( k-1\right) p+1}}f^{\prime }(z)}{{g_{k}(z)}}%
-\gamma }{p-\gamma }\qquad \left( f\in \mathcal{K}_{s}^{\left( k\right)
}\left( \gamma ,p\right) \right) ,
\end{equation*}%
we find that%
\begin{equation*}
p(z)\in \mathcal{P}\qquad \text{and\qquad }\Re \left( p(z)\right) >0\qquad
\left( z\in \mathbb{U}\right) .
\end{equation*}%
Using Lemma $\ref{lemma4}$, we have%
\begin{equation}
\frac{\frac{{z^{\left( k-1\right) p+1}}f^{\prime }(z)}{{g_{k}(z)}}-\gamma }{%
p-\gamma }\neq \frac{\zeta -1}{\zeta +1}\qquad \left( z\in \mathbb{U}%
;\;\zeta \in \mathbb{C};\;\left\vert \zeta \right\vert =1\right) .
\label{2.d}
\end{equation}%
For $z=0$, the above relation holds, since%
\begin{equation*}
\left. \frac{\frac{{z^{\left( k-1\right) p+1}}f^{\prime }(z)}{{g_{k}(z)}}%
-\gamma }{p-\gamma }\right\vert _{z=0}=1\neq \frac{\zeta -1}{\zeta +1}\qquad
\left( \zeta \in \mathbb{C};\;\left\vert \zeta \right\vert =1\right) .
\end{equation*}%
For $z\neq 0$, the relation $\left( \ref{2.d}\right) $ is equivalent to%
\begin{equation*}
\left( \zeta +1\right) {z^{\left( k-1\right) p+1}}f^{\prime }(z)+\left(
p-2\gamma -p\zeta \right) {g_{k}(z)\neq 0\quad }\left( f\in \mathcal{K}%
_{s}^{\left( k\right) }\left( \gamma ,p\right) ;\;\zeta \in \mathbb{C}%
;\;\left\vert \zeta \right\vert =1\right) .
\end{equation*}%
Thus from $\left( \ref{1.1}\right) $ and $\left( \ref{2.1}\right) $\ we find
that%
\begin{equation*}
\left( \zeta +1\right) \left( pz^{p}+\sum_{n=1}^{\infty }\left( n+p\right)
a_{n+p}z^{n+p}\right) +\left( p-2\gamma -p\zeta \right) \left(
z^{p}+\sum_{n=1}^{\infty }B_{n+p}z^{n+p}\right) {\neq 0,}
\end{equation*}%
that is, that%
\begin{equation*}
2\left( p-\gamma \right) z^{p}+\sum_{n=1}^{\infty }\left[ \left( \zeta
+1\right) \left( n+p\right) a_{n+p}+\left( p-2\gamma -p\zeta \right) B_{n+p}%
\right] z^{n+p}{\neq 0.}
\end{equation*}%
Now, dividing both sides of above relation by $2\left( p-\gamma \right)
z^{p}\;\left( z\neq 0\right) $, we obtain%
\begin{equation*}
1+\sum_{n=1}^{\infty }\frac{\left( \zeta +1\right) \left( n+p\right)
a_{n+p}+\left( p-2\gamma -p\zeta \right) B_{n+p}}{2\left( p-\gamma \right) }%
z^{n}{\neq 0}
\end{equation*}%
which completes the proof of Lemma $\ref{lemma5}$.
\end{proof}

\begin{lem}
\label{lemma3}\cite{L} Let $-1\leq B_{2}\leq B_{1}<A_{1}\leq A_{2}\leq 1$.
Then%
\begin{equation*}
\frac{1+A_{1}z}{1+B_{1}z}\prec \frac{1+A_{2}z}{1+B_{2}z}.
\end{equation*}
\end{lem}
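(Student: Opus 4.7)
The plan is to recast the subordination as the existence of a Schwarz function and construct it by direct algebraic inversion. Since $A_{2}\neq B_{2}$ and $|B_{2}|\leq 1$, the map $\Theta(z)=\frac{1+A_{2}z}{1+B_{2}z}$ is a M\"obius transformation whose only possible pole, $z=-1/B_{2}$, lies in $\{|z|\geq 1\}$; hence $\Theta$ is analytic and univalent on $\mathbb{U}$. Since $\Theta(0)=1=\frac{1+A_{1}\cdot 0}{1+B_{1}\cdot 0}$, the equivalence for subordination to a univalent function recalled in the Introduction reduces the claim to exhibiting an analytic $\omega:\mathbb{U}\to \mathbb{U}$ with $\omega(0)=0$ such that $\frac{1+A_{1}z}{1+B_{1}z}=\Theta(\omega(z))$.

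Solving this identity for $\omega$ gives the explicit candidate
$$\omega(z)=\frac{(A_{1}-B_{1})\,z}{(A_{2}-B_{2})+(A_{2}B_{1}-A_{1}B_{2})\,z},$$
for which $\omega(0)=0$ is immediate and analyticity on $\overline{\mathbb{U}}$ will follow from the estimate below on the denominator. The main step is therefore to establish $|\omega(z)|<1$ on $\mathbb{U}$. Writing $\omega(z)=z\,\psi(z)$ and applying the maximum modulus principle to $\psi$, this reduces via the triangle inequality to the numerical bound
$$(A_{2}-B_{2})-|A_{2}B_{1}-A_{1}B_{2}|\;\geq\; A_{1}-B_{1}.$$

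This last inequality is the principal obstacle, but it collapses once one writes $u=A_{2}-A_{1}\geq 0$ and $v=B_{1}-B_{2}\geq 0$ and observes the algebraic identity $A_{2}B_{1}-A_{1}B_{2}=uB_{1}+A_{1}v$. Combined with $|A_{1}|\leq 1$ and $|B_{1}|\leq 1$ (both of which follow from the hypotheses, since $-1\leq B_{2}\leq B_{1}<A_{1}\leq A_{2}\leq 1$), this yields $|A_{2}B_{1}-A_{1}B_{2}|\leq u+v=(A_{2}-B_{2})-(A_{1}-B_{1})$, which is exactly the inequality required. The only borderline situation worth checking separately is $B_{2}=-1$, where $\Theta$ has a pole on $\partial \mathbb{U}$ and maps $\mathbb{U}$ onto a half-plane rather than a disk, but the algebraic construction of $\omega$ above and the subsequent estimate go through without modification.
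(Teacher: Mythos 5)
Your proof is correct, but it is worth noting that the paper itself offers no argument for this lemma at all: it is simply quoted from Liu's paper \cite{L} as a known subordination result. What you supply is therefore a genuinely self-contained alternative: you construct the Schwarz function explicitly by solving $\frac{1+A_{1}z}{1+B_{1}z}=\frac{1+A_{2}\omega(z)}{1+B_{2}\omega(z)}$ for $\omega$, which gives exactly your formula $\omega(z)=\frac{(A_{1}-B_{1})z}{(A_{2}-B_{2})+(A_{2}B_{1}-A_{1}B_{2})z}$, and then reduce $|\omega|<1$ to the numerical bound $(A_{2}-B_{2})-|A_{2}B_{1}-A_{1}B_{2}|\geq A_{1}-B_{1}$. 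I checked the key identity: with $u=A_{2}-A_{1}\geq 0$, $v=B_{1}-B_{2}\geq 0$ one indeed has $A_{2}B_{1}-A_{1}B_{2}=uB_{1}+A_{1}v$, and since the hypotheses force $|A_{1}|\leq 1$, $|B_{1}|\leq 1$, this yields $|A_{2}B_{1}-A_{1}B_{2}|\leq u+v=(A_{2}-B_{2})-(A_{1}-B_{1})$, which is exactly what is needed; the bound also places the pole of $\omega$ strictly outside $\overline{\mathbb{U}}$ (because $A_{1}-B_{1}>0$), so analyticity is secured, and $|\omega(z)|=|z|\,|\psi(z)|<1$ follows from the maximum principle applied to $\psi(z)=\omega(z)/z$ as you say. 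Two cosmetic points: strictly speaking you do not need univalence of $\Theta$ or the equivalence from the Introduction at all, since exhibiting the Schwarz function is already the definition of subordination (so the aside about $B_{2}=-1$ and the half-plane image is harmless but unnecessary); and it would be tidier to note explicitly that $1+B_{2}\omega(z)\neq 0$ (immediate from $|\omega|<1$, $|B_{2}|\leq 1$) when reversing the algebra to conclude $\frac{1+A_{1}z}{1+B_{1}z}=\Theta(\omega(z))$. The more common textbook route compares the image disks/half-planes of the two M\"obius maps directly; your computation avoids any geometric discussion at the cost of the coefficient estimate, and both are equally elementary.
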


\section{Main Results}

We now state and prove the main results of our present investigation.

\begin{thm}
\label{thm1} Let $f$ be an analytic function in $\mathbb{U}$ given by $%
\left( \ref{1.1}\right) $. Then $f\in \mathcal{K}_{s}^{\left( k\right)
}\left( \gamma ,p\right) $ if and only if there exists a function $g\in
\mathcal{S}_{p}^{\ast }\left( \frac{\left( k-1\right) p}{k}\right) $ such
that
\begin{equation}
\frac{{z^{\left( k-1\right) p+1}}f^{\prime }(z)}{{g_{k}(z)}}\prec \frac{%
p+\left( p-2\gamma \right) z}{1-z}\qquad \left( z\in \mathbb{U}\right) ,
\label{2.2}
\end{equation}%
where $g_{k}$ is given by $\left( \ref{1.5}\right) $.
\end{thm}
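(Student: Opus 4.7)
The theorem is a restatement of the defining inequality $(\ref{1.4})$ in the equivalent language of subordination, so the strategy is to identify the real-part condition $\Re(H(z))>\gamma$ with the subordination $H(z)\prec\Theta(z)$, where
\[
H(z)=\frac{z^{(k-1)p+1}f'(z)}{g_{k}(z)},\qquad \Theta(z)=\frac{p+(p-2\gamma)z}{1-z}.
\]
The only ``if and only if'' content is that both statements use the \emph{same} auxiliary function $g\in\mathcal{S}_p^*\bigl(\tfrac{(k-1)p}{k}\bigr)$, so the argument can be carried out with $g$ fixed throughout.

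First I would check that $\Theta$ is a M\"obius transformation, hence univalent in $\mathbb{U}$, that $\Theta(0)=p$, and that $\Theta$ maps $\mathbb{U}$ conformally onto the half-plane $\{w:\Re(w)>\gamma\}$; a one-line computation at $z=e^{i\theta}$ shows $\Re\,\Theta=\gamma$ on $\partial\mathbb{U}\setminus\{1\}$, pinning down the image.

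Next I would verify that $H$ is analytic in $\mathbb{U}$ with $H(0)=p$. Using $(\ref{1.1})$,
\[
z^{(k-1)p+1}f'(z)=pz^{kp}+O(z^{kp+1}),
\]
while from $g(z)=z^p+\sum b_{n+p}z^{n+p}$ we get $\varepsilon^{-\nu p}g(\varepsilon^\nu z)=z^p+O(z^{p+1})$ for each $\nu$, whence $g_k(z)=z^{kp}+O(z^{kp+1})$. Thus the quotient is analytic at $0$ with value $p$. Moreover, Lemma $\ref{lemma1}$ ensures $G_k(z)=g_k(z)/z^{(k-1)p}\in\mathcal{S}_p^*$, and since a $p$-valently starlike function has no zero in $\mathbb{U}\setminus\{0\}$, the denominator $g_k$ vanishes only at the origin, so $H$ is analytic throughout $\mathbb{U}$.

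With these preliminaries in hand, the equivalence recalled in the introduction applies: since $\Theta$ is univalent in $\mathbb{U}$,
\[
H\prec\Theta \iff H(0)=\Theta(0) \text{ and } H(\mathbb{U})\subset\Theta(\mathbb{U}).
\]
Because $H(0)=p=\Theta(0)$ and $\Theta(\mathbb{U})=\{w:\Re(w)>\gamma\}$, this collapses to $\Re(H(z))>\gamma$ for all $z\in\mathbb{U}$, which is precisely condition $(\ref{1.4})$ in Definition $\ref{def1}$. Conversely, if $(\ref{1.4})$ holds, the same equivalence delivers the subordination in $(\ref{2.2})$ with the same $g$. I do not expect any serious obstacle; the only point requiring care is the analyticity and normalization $H(0)=p$, which is why the exponent $(k-1)p+1$ in the numerator is exactly right.
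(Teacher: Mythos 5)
Your proposal is correct and takes essentially the same route as the paper: both reduce the subordination $(\ref{2.2})$ to the real-part condition $(\ref{1.4})$ with the same fixed $g$, using the fact that $\Theta(z)=\gamma+(p-\gamma)\frac{1+z}{1-z}$ is univalent and maps $\mathbb{U}$ onto the half-plane $\{w:\Re (w)>\gamma\}$. The paper phrases the converse via the Schwarz-function representation and the inequality $(\ref{1.6})$ instead of the image criterion, but this is only a presentational difference; your additional verifications (analyticity of $H$ with $H(0)=p$ and nonvanishing of $g_{k}$ off the origin) are sound and merely make explicit what the paper leaves implicit.
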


\begin{proof}
Let $f\in \mathcal{K}_{s}^{\left( k\right) }\left( \gamma ,p\right) $. Then,
there exists a function $g\in \mathcal{S}_{p}^{\ast }\left( \frac{\left(
k-1\right) p}{k}\right) $ such that%
\begin{equation*}
\Re \left( \frac{{z^{\left( k-1\right) p+1}}f^{\prime }(z)}{{g_{k}(z)}}%
\right) >\gamma \quad \left( z\in \mathbb{U};\;0\leq \gamma <p\right)
\end{equation*}%
or equivalently%
\begin{equation*}
\frac{{z^{\left( k-1\right) p+1}}f^{\prime }(z)}{{g_{k}(z)}}\prec \frac{%
p+\left( p-2\gamma \right) z}{1-z}.
\end{equation*}%
Conversely, we assume that the subordination $\left( \ref{2.2}\right) $
holds. Then, there exists an analytic function $w$ in $\mathbb{U}$ such that
$w(0)=0,$ $\left\vert w\left( z\right) \right\vert <1$ and%
\begin{equation*}
\frac{{z^{\left( k-1\right) p+1}}f^{\prime }(z)}{{g_{k}(z)}}=\frac{p+\left(
p-2\gamma \right) w\left( z\right) }{1-w\left( z\right) }.
\end{equation*}%
Hence, by using condition $\left\vert w\left( z\right) \right\vert <1$ we
get $\left( \ref{1.6}\right) $, which is equivalent to $\left( \ref{1.4}%
\right) $, so $f\in \mathcal{K}_{s}^{\left( k\right) }\left( \gamma
,p\right) .$
\end{proof}

\noindent \textbf{Remark }$\mathbf{2}$\textbf{.} Letting $p=1$ in Theorem $%
\ref{thm1}$, we have \cite[Theorem 1]{seker}.

\begin{thm}
\label{thm2}We have%
\begin{equation*}
\mathcal{K}_{s}^{\left( k\right) }\left( \gamma ,p\right) \subset \mathcal{K}%
_{p}.
\end{equation*}
\end{thm}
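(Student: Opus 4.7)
The plan is a direct verification using Lemma \ref{lemma1} as the bridge between the defining starlike order $(k-1)p/k$ of $g$ and the ordinary starlikeness required by the definition of $\mathcal{K}_p$.

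First I would take an arbitrary $f \in \mathcal{K}_{s}^{(k)}(\gamma,p)$ and unpack the definition: there is some $g \in \mathcal{S}_p^{*}\bigl(\frac{(k-1)p}{k}\bigr)$ with the product $g_k$ defined by \eqref{1.5} satisfying
\begin{equation*}
\Re\left(\frac{z^{(k-1)p+1} f'(z)}{g_k(z)}\right) > \gamma \qquad (z \in \mathbb{U}).
\end{equation*}
The candidate starlike function for membership in $\mathcal{K}_p$ is $G_k(z) := g_k(z)/z^{(k-1)p}$, which by Lemma \ref{lemma1} belongs to $\mathcal{S}_p^{*} = \mathcal{S}_p^{*}(0)$, so it is an admissible partner.

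The second step is a one-line algebraic identity: multiplying numerator and denominator of $zf'(z)/G_k(z)$ by $z^{(k-1)p}$ gives
\begin{equation*}
\frac{zf'(z)}{G_k(z)} = \frac{z \cdot z^{(k-1)p} f'(z)}{g_k(z)} = \frac{z^{(k-1)p+1} f'(z)}{g_k(z)}.
\end{equation*}
Hence the real part of $zf'(z)/G_k(z)$ exceeds $\gamma \geq 0$ on $\mathbb{U}$, which is exactly the condition defining $\mathcal{K}_p$ (i.e.\ $\mathcal{K}_p(0)$) with $G_k$ in the role of the starlike companion.

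There is essentially no obstacle here: the whole content is that Lemma \ref{lemma1} promotes the order-$(k-1)p/k$ starlikeness of $g$ to ordinary $p$-valent starlikeness of $G_k$, and the normalization factor $z^{(k-1)p}$ that converts $g_k$ into $G_k$ is exactly what is absorbed by the factor $z^{(k-1)p}$ sitting next to $z f'(z)$ in the defining inequality \eqref{1.4}. The only thing to flag is the harmless inequality $\gamma \geq 0$, which is what lets us drop from order $\gamma$ down to order $0$ in the conclusion.
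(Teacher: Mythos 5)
Your proposal is correct and follows essentially the same route as the paper: rewrite the defining inequality $(\ref{1.4})$ as $\Re\bigl(zf'(z)/G_k(z)\bigr)>\gamma\geq 0$ with $G_k(z)=g_k(z)/z^{(k-1)p}$, and invoke Lemma \ref{lemma1} to conclude $G_k\in\mathcal{S}_p^{\ast}$, hence $f\in\mathcal{K}_p$. No differences worth noting beyond your explicit (and correct) remark that $\gamma\geq 0$ is what allows passing to order $0$.
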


\begin{proof}
Let $f\in \mathcal{K}_{s}^{\left( k\right) }\left( \gamma ,p\right) $ be an
arbitrary function. From Definition $\ref{def1}$, we obtain%
\begin{equation}
\Re \left( \frac{zf^{\prime }(z)}{\frac{{g_{k}(z)}}{{z^{\left( k-1\right) p}}%
}}\right) >\gamma .  \label{2.3}
\end{equation}%
Note that the condition $\left( \ref{2.3}\right) $ can be written as
\begin{equation*}
\Re \left( \frac{{zf}^{\prime }(z)}{{G_{k}(z)}}\right) >\gamma ,
\end{equation*}%
where ${G_{k}}$ is given by $\left( \ref{2.1}\right) $. By Lemma $\ref%
{lemma1}$ since $G_{k}\in \mathcal{S}_{p}^{\ast }$, from the above
inequality, we deduce that $f\in \mathcal{K}_{p}.$
\end{proof}

\begin{thm}
\label{thm3}Suppose that $g\in \mathcal{S}_{p}^{\ast }\left( \frac{\left(
k-1\right) p}{k}\right) $, where $g_{k}$ is given by $\left( \ref{1.5}%
\right) $. If $f$ is an analytic function in $\mathbb{U}$ of the form $%
\left( \ref{1.1}\right) $, such that%
\begin{equation}
2\sum_{n=1}^{\infty }\left( n+p\right) \left\vert a_{n+p}\right\vert +\left(
\left\vert p-2\gamma \right\vert +p\right) \sum_{n=1}^{\infty }\left\vert
B_{n+p}\right\vert <2\left( p-\gamma \right) ,  \label{2.9}
\end{equation}%
where the coefficients $B_{n+p}$ are given by $\left( \ref{2.1}\right) $,
then $f\in \mathcal{K}_{s}^{(k)}\left( \gamma ,p\right) .$
\end{thm}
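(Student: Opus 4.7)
The natural route is to apply Lemma \ref{lemma5}, which reduces membership in $\mathcal{K}_{s}^{(k)}(\gamma,p)$ to the non-vanishing of the power series
\[
F(z) = 1 + \sum_{n=1}^{\infty} A_{n+p}\,z^{n},\qquad A_{n+p}=\frac{(\zeta+1)(n+p)a_{n+p}+(p-2\gamma-p\zeta)B_{n+p}}{2(p-\gamma)},
\]
for every $z\in\mathbb{U}$ and every $\zeta\in\mathbb{C}$ with $|\zeta|=1$. The plan is simply to show that the dominant coefficient $1$ beats the tail in absolute value.

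First I would apply the triangle inequality together with the estimates $|\zeta+1|\le 2$ and $|p-2\gamma-p\zeta|\le |p-2\gamma|+p$, valid uniformly for $|\zeta|=1$, to obtain
\[
|A_{n+p}| \;\le\; \frac{2(n+p)|a_{n+p}| + (|p-2\gamma|+p)|B_{n+p}|}{2(p-\gamma)}.
\]
Next, for $z\in\mathbb{U}$ I would bound $\bigl|\sum_{n\ge1} A_{n+p}z^{n}\bigr| \le \sum_{n\ge1}|A_{n+p}||z|^{n} < \sum_{n\ge1}|A_{n+p}|$, and then combine this with the coefficient bound above to get
\[
\Bigl|\sum_{n=1}^{\infty} A_{n+p}z^{n}\Bigr| \;<\; \frac{2\sum_{n=1}^{\infty}(n+p)|a_{n+p}|+(|p-2\gamma|+p)\sum_{n=1}^{\infty}|B_{n+p}|}{2(p-\gamma)} \;<\; 1,
\]
where the last inequality is exactly the hypothesis \eqref{2.9}. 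Consequently $F(z)\neq 0$ for every $z\in\mathbb{U}$ and every unimodular $\zeta$, so Lemma \ref{lemma5} yields $f\in\mathcal{K}_{s}^{(k)}(\gamma,p)$.

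The argument has no real obstacle: the only subtle point is making the majorization uniform in $\zeta$ on the unit circle, which is handled by the crude but sharp bounds $|\zeta+1|\le 2$ and $|p-2\gamma-p\zeta|\le |p-2\gamma|+p$. Lemma \ref{lemma1} is needed implicitly only to guarantee that the series expansion \eqref{2.1} for $G_k$, and hence the coefficients $B_{n+p}$, are well-defined; no further information about $g$ is required.
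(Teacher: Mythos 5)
Your proof is correct, but it takes a different route from the paper. You invoke the non-vanishing criterion of Lemma \ref{lemma5} (which itself rests on Lemma \ref{lemma4}) and kill the tail $\sum_{n\geq 1}A_{n+p}z^{n}$ uniformly in $\zeta$ via the bounds $\left\vert \zeta +1\right\vert \leq 2$ and $\left\vert p-2\gamma -p\zeta \right\vert \leq \left\vert p-2\gamma \right\vert +p$; the hypothesis \eqref{2.9} then makes the modulus of the tail strictly less than $1$, so the series cannot vanish and membership follows. The paper instead argues directly and elementarily: it forms the difference $\Lambda =\bigl\vert zf^{\prime }(z)-p\,g_{k}(z)/z^{(k-1)p}\bigr\vert -\bigl\vert zf^{\prime }(z)+(p-2\gamma )g_{k}(z)/z^{(k-1)p}\bigr\vert$, expands both terms in power series, and shows via the triangle inequality that \eqref{2.9} forces $\Lambda <0$, which is exactly the inequality \eqref{1.6} defining the class; no appeal to Lemmas \ref{lemma4} or \ref{lemma5} is needed. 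The two arguments use the identical numerical estimate $2\sum (n+p)\vert a_{n+p}\vert +(\vert p-2\gamma \vert +p)\sum \vert B_{n+p}\vert <2(p-\gamma )$, so they are close in substance; yours buys brevity by reusing machinery already proved in the paper, while the paper's is self-contained. Two small remarks on your write-up: the intermediate strict inequality $\bigl\vert \sum A_{n+p}z^{n}\bigr\vert <\sum \vert A_{n+p}\vert$ degenerates to equality if all $A_{n+p}$ vanish, but the final bound $<1$ still holds because \eqref{2.9} is strict, so nothing breaks; and the starlikeness of $g$ does slightly more than make the $B_{n+p}$ well-defined --- it guarantees $g_{k}(z)\neq 0$ for $z\neq 0$, which is what makes the quotient in Lemma \ref{lemma5} analytic --- though since you cite Lemma \ref{lemma5} as established, this is absorbed there.
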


\begin{proof}
For $f$ given by $\left( \ref{1.1}\right) $ and $g_{k}$ defined by $\left( %
\ref{1.5}\right) $, we set%
\begin{eqnarray*}
\Lambda &=&\left\vert zf^{\prime }(z)-p\frac{{g_{k}(z)}}{{z^{\left(
k-1\right) p}}}\right\vert -\left\vert zf^{\prime }(z)+\frac{\left(
p-2\gamma \right) {g_{k}(z)}}{{z^{\left( k-1\right) p}}}\right\vert \\
&=&\left\vert \sum_{n=1}^{\infty }\left( n+p\right)
a_{n+p}z^{n+p}-p\sum_{n=1}^{\infty }B_{n+p}z^{n+p}\right\vert \\
&&-\left\vert 2\left( p-\gamma \right) z^{p}+\sum_{n=1}^{\infty }\left(
n+p\right) a_{n+p}z^{n+p}+\left( p-2\gamma \right) \sum_{n=1}^{\infty
}B_{n+p}z^{n+p}\right\vert .
\end{eqnarray*}%
Thus, for $\left\vert z\right\vert =r\,\left( 0\leq r<1\right) $, we get%
\begin{eqnarray*}
\Lambda &\leq &\sum_{n=1}^{\infty }\left( n+p\right) \left\vert
a_{n+p}\right\vert \left\vert z\right\vert ^{n+p}+p\sum_{n=1}^{\infty
}\left\vert B_{n+p}\right\vert \left\vert z\right\vert ^{n+p} \\
&&-\left( 2\left( p-\gamma \right) \left\vert z\right\vert
^{p}-\sum_{n=1}^{\infty }\left( n+p\right) \left\vert a_{n+p}\right\vert
\left\vert z\right\vert ^{n+p}-\left\vert p-2\gamma \right\vert
\sum_{n=1}^{\infty }\left\vert B_{n+p}\right\vert \left\vert z\right\vert
^{n+p}\right) \\
&=&-2\left( p-\gamma \right) \left\vert z\right\vert
^{p}+2\sum_{n=1}^{\infty }\left( n+p\right) \left\vert a_{n+p}\right\vert
\left\vert z\right\vert ^{n+p}+\left( \left\vert p-2\gamma \right\vert
+p\right) \sum_{n=1}^{\infty }\left\vert B_{n+p}\right\vert \left\vert
z\right\vert ^{n+p} \\
&=&-\left( 2\left( p-\gamma \right) +2\sum_{n=1}^{\infty }\left( n+p\right)
\left\vert a_{n+p}\right\vert +\left( \left\vert p-2\gamma \right\vert
+p\right) \sum_{n=1}^{\infty }\left\vert B_{n+p}\right\vert \right)
\left\vert z\right\vert ^{p}.
\end{eqnarray*}%
From inequality $\left( \ref{2.9}\right) $, we obtain that $\Lambda <0$.
Thus we have%
\begin{equation*}
\left\vert zf^{\prime }(z)-p\frac{{g_{k}(z)}}{{z^{\left( k-1\right) p}}}%
\right\vert <\left\vert zf^{\prime }(z)+\frac{\left( p-2\gamma \right) {%
g_{k}(z)}}{{z^{\left( k-1\right) p}}}\right\vert ,
\end{equation*}%
which is equivalent to $\left( \ref{1.6}\right) $. Hence $f\in \mathcal{K}%
_{s}^{(k)}\left( \gamma ,p\right) $. This completes the proof of Theorem $%
\ref{thm3}$.
\end{proof}

\noindent \textbf{Remark }$\mathbf{3}$\textbf{.} Letting $p=1$ in Theorem $%
\ref{thm3}$, we have \cite[Theorem 2]{seker}.

\begin{thm}
\label{thm4}Suppose that an analytic function $f$ given by $\left( \ref{1.1}%
\right) $ and $g\in \mathcal{S}_{p}^{\ast }\left( \frac{\left( k-1\right) p}{%
k}\right) $ given by $\left( \ref{2}\right) $ are such that the condition $%
\left( \ref{1.4}\right) $ holds. Then, for $n\geq 1$, we have%
\begin{equation}
\left\vert \left( n+p\right) a_{n+p}-pB_{n+p}\right\vert ^{2}-4\left(
p-\gamma \right) ^{2}\leq 2\left( p-\gamma \right)
\sum_{m=p+1}^{n+p-1}\left\{ 2m\left\vert a_{m}B_{m}\right\vert +\left(
\left\vert p-2\gamma \right\vert +p\right) \left\vert B_{m}\right\vert
^{2}\right\} ,  \label{2.7}
\end{equation}%
where the coefficients $B_{n+p}$ are given by $\left( \ref{2.1}\right) $.
\end{thm}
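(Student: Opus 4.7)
The plan is to apply Lemma~\ref{lemma2} with $\beta = 1$ to condition~(\ref{1.4}) to recover an analytic function $\varphi$ on $\mathbb{U}$ with $|\varphi(z)| \leq 1$ satisfying
\[
\frac{z^{(k-1)p+1}f'(z)}{g_k(z)} = \frac{p - (p-2\gamma)z\varphi(z)}{1 + z\varphi(z)}.
\]
Cross-multiplying and using $G_k(z) := g_k(z)/z^{(k-1)p} \in \mathcal{S}_p^*$ (by Lemma~\ref{lemma1}) produces the master identity
\[
zf'(z) - pG_k(z) = -z\varphi(z)\bigl[zf'(z) + (p-2\gamma)G_k(z)\bigr].
\]
Introducing $F(z) := zf'(z) - pG_k(z) = \sum_{m \geq 1} c_m z^{m+p}$ and $H(z) := zf'(z) + (p-2\gamma)G_k(z) = 2(p-\gamma)z^p + \sum_{m \geq 1} d_m z^{m+p}$, where $c_m = (m+p)a_{m+p} - pB_{m+p}$ and $d_m = (m+p)a_{m+p} + (p-2\gamma)B_{m+p}$, this rewrites compactly as $F = -z\varphi H$.

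Next, I would square the pointwise inequality $|F(z)|^2 \leq |z|^2|H(z)|^2$ and integrate around the circle $|z| = r$ for $r \in (0,1)$. Parseval's formula then gives
\[
\sum_{m \geq 1}|c_m|^2 r^{2(m+p)} \leq r^2 \Bigl(4(p-\gamma)^2 r^{2p} + \sum_{m \geq 1}|d_m|^2 r^{2(m+p)}\Bigr).
\]
Retaining only the $n$-th term on the left (all summands are nonnegative), dividing by $r^{2(n+p)}$, and letting $r \to 1^-$ yields the intermediate estimate
\[
|c_n|^2 \leq 4(p-\gamma)^2 + \sum_{m=1}^{n-1}|d_m|^2.
\]

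The final step is to convert the truncated sum $\sum_{m=1}^{n-1}|d_m|^2$ into the stated right-hand side. Writing $d_m = c_m + 2(p-\gamma)B_{m+p}$, one obtains $|d_m|^2 \leq (|c_m| + 2(p-\gamma)|B_{m+p}|)^2$, and combining with the triangle estimate $|c_m| \leq (m+p)|a_{m+p}| + p|B_{m+p}|$ one expands and regroups so as to produce terms of the shape $2(p-\gamma)\{2m|a_m B_m| + (|p-2\gamma|+p)|B_m|^2\}$ with $m = j+p$.

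The main obstacle lies in this concluding algebraic step: a naive triangle expansion of $|d_m|^2$ introduces a spurious $(m+p)^2|a_{m+p}|^2$ contribution that is absent from the target. Absorbing it requires exploiting both the identity $d_m - c_m = 2(p-\gamma)B_{m+p}$ and the coefficient bound $|c_m| \leq 2(p-\gamma)$ which follows from Rogosinski's inequality applied to the subordination $zf'(z)/G_k(z) \prec (p + (p-2\gamma)z)/(1-z)$; these together permit one to trade the extraneous $(m+p)|a_{m+p}|$ factors for $|B_{m+p}|$ factors in a controlled way. This absorption, rather than any part of the Parseval estimate itself, is the most delicate step.
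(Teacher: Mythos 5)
Your setup (Lemma \ref{lemma2} with $\beta=1$, the identity $F=-z\varphi H$ with $F=zf'-pG_k$, $H=zf'+(p-2\gamma)G_k$) matches the paper, but the Parseval step is where your argument breaks. From the pointwise bound $|F(z)|\le |z|\,|H(z)|$ on $|z|=r$ you only get $\sum_{m\ge 1}|c_m|^2 r^{2(m+p)}\le r^2\bigl(4(p-\gamma)^2r^{2p}+\sum_{m\ge 1}|d_m|^2 r^{2(m+p)}\bigr)$ with the \emph{full} series on the right; after dividing by $r^{2(n+p)}$ and letting $r\to 1^-$, every term $r^{2(m+1-n)}$ tends to $1$ for each fixed $m$, including all $m\ge n$, so what you obtain is $|c_n|^2\le 4(p-\gamma)^2+\sum_{m\ge1}|d_m|^2$, not the truncated bound $\sum_{m=1}^{n-1}|d_m|^2$ — and the full series may well diverge, since $G_k$ (a $p$-valent starlike function) need not have square-summable coefficients. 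The truncation is exactly what Clunie's method supplies and what the paper uses: because $z\varphi(z)$ vanishes at the origin, the coefficients of $z^{p+1},\dots,z^{n+p}$ in $F=-z\varphi H$ depend only on the coefficients of $H$ up to $z^{n+p-1}$, so one may replace $H$ by its partial sum $2(p-\gamma)z^p+\sum_{m=1}^{n-1}d_m z^{m+p}$ at the cost of altering only coefficients of order greater than $n+p$, and \emph{then} apply $|z\varphi(z)|<1$, integrate, use Parseval, discard the nonnegative tail, and let $r\to1$. This step is missing from your proposal and cannot be bypassed by the limit argument you describe.

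The second problem is your final step. By "retaining only the $n$-th term on the left" you throw away $\sum_{m=1}^{n-1}|c_m|^2$, which is precisely what cancels the quadratic terms: the paper keeps the whole sum and uses $|d_m|^2-|c_m|^2=4(p-\gamma)\,\Re\!\left(ma_m\overline{B_m}\right)+\bigl[(p-2\gamma)^2-p^2\bigr]|B_m|^2\le 2(p-\gamma)\bigl\{2m|a_mB_m|+(|p-2\gamma|+p)|B_m|^2\bigr\}$ (in the paper's indexing $m=p+1,\dots,n+p-1$), so the $(m+p)^2|a_{m+p}|^2$ terms never appear and no absorption is needed. Your proposed repair, the bound $|c_m|\le 2(p-\gamma)$ "by Rogosinski," is not justified: Rogosinski's inequality controls the coefficients of the subordinate function $zf'(z)/G_k(z)-p$, whereas $c_m$ is a coefficient of the product $G_k(z)\bigl(zf'(z)/G_k(z)-p\bigr)$, which involves the (unbounded) coefficients $B_{j+p}$; so that estimate is unproven and generally false, and the "controlled trade" you invoke is never carried out. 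To fix the proof, restore the Clunie truncation and keep the full sum $\sum_{m=1}^{n}|c_m|^2$ on the left before passing to the difference-of-squares estimate.
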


\begin{proof}
Suppose that the condition $\left( \ref{1.4}\right) $ is satisfied. Then
from Lemma $\ref{lemma2}$, we have%
\begin{equation*}
\frac{{zf}^{\prime }(z)}{{G_{k}(z)}}=\frac{p-\left( p-2\gamma \right)
z\varphi \left( z\right) }{1+z\varphi \left( z\right) }\qquad \left( z\in
\mathbb{U}\right) ,
\end{equation*}%
where $\varphi $ is an analytic functions in $\mathbb{U}$, $\left\vert
\varphi \left( z\right) \right\vert \leq 1$ for $z\in \mathbb{U}$, and ${%
G_{k}}$ is given by $\left( \ref{2.1}\right) $. From the above equality, we
obtain that%
\begin{equation}
\left[ {zf}^{\prime }(z)+\left( p-2\gamma \right) {G_{k}(z)}\right] z\varphi
\left( z\right) =p{G_{k}(z)-zf}^{\prime }(z).  \label{2.5}
\end{equation}%
Now, we put%
\begin{equation*}
z\varphi \left( z\right) :=\sum_{n=1}^{\infty }t_{n}z^{n}\qquad \left( z\in
\mathbb{U}\right) .
\end{equation*}%
Thus from $\left( \ref{2.5}\right) $ we find that%
\begin{eqnarray}
&&\left( 2\left( p-\gamma \right) z^{p}+\sum_{n=1}^{\infty }\left(
n+p\right) a_{n+p}z^{n+p}+\left( p-2\gamma \right) \sum_{n=1}^{\infty
}B_{n+p}z^{n+p}\right) \sum_{n=1}^{\infty }t_{n}z^{n}  \notag \\
&=&p\sum_{n=1}^{\infty }B_{n+p}z^{n+p}-\sum_{n=1}^{\infty }\left( n+p\right)
a_{n+p}z^{n+p}.  \label{2.6}
\end{eqnarray}%
Equating the coefficient of $z^{n+p}$ in $\left( \ref{2.6}\right) $, we have%
\begin{eqnarray*}
pB_{n+p}-\left( n+p\right) a_{n+p} &=&2\left( p-\gamma \right) t_{n}+\left[
\left( p+1\right) a_{p+1}+\left( p-2\gamma \right) B_{p+1}\right] t_{n-1} \\
&&+\cdots +\left[ \left( n+p-1\right) a_{n+p-1}+\left( p-2\gamma \right)
B_{n+p-1}\right] t_{1}.
\end{eqnarray*}%
Note that the coefficient combination on the right side of $\left( \ref{2.6}%
\right) $ depends only upon the coefficients combinations:%
\begin{equation*}
\left[ \left( p+1\right) a_{p+1}+\left( p-2\gamma \right) B_{p+1}\right]
,\ldots ,\left[ \left( n+p-1\right) a_{n+p-1}+\left( p-2\gamma \right)
B_{n+p-1}\right] .
\end{equation*}%
Hence, for $n\geq 1$, we can write as%
\begin{eqnarray*}
\left( 2\left( p-\gamma \right) z^{p}+\sum_{m=p+1}^{n+p-1}\left[
ma_{m}+\left( p-2\gamma \right) B_{m}\right] z^{m}\right) z\varphi \left(
z\right)  &=&\sum_{m=p+1}^{n+p}\left[ pB_{m}-ma_{m}\right] z^{m} \\
&&+\sum_{m=n+p+1}^{\infty }c_{m}z^{m}.
\end{eqnarray*}%
Using the fact that $\left\vert z\varphi \left( z\right) \right\vert \leq
\left\vert z\right\vert <1$ for all $z\in \mathbb{U}$, this reduces to the
inequality%
\begin{equation*}
\left\vert 2\left( p-\gamma \right) z^{p}+\sum_{m=p+1}^{n+p-1}\left[
ma_{m}+\left( p-2\gamma \right) B_{m}\right] z^{m}\right\vert >\left\vert
\sum_{m=p+1}^{n+p}\left[ pB_{m}-ma_{m}\right] z^{m}+\sum_{m=n+p+1}^{\infty
}c_{m}z^{m}\right\vert .
\end{equation*}%
Then squaring the above inequality and integrating along $\left\vert
z\right\vert =r<1$, we obtain%
\begin{eqnarray*}
&&\int_{0}^{2\pi }\left\vert 2\left( p-\gamma \right) r^{p}e^{ip\theta
}+\sum_{m=p+1}^{n+p-1}\left[ ma_{m}+\left( p-2\gamma \right) B_{m}\right]
r^{m}e^{im\theta }\right\vert ^{2}d\theta  \\
&>&\int_{0}^{2\pi }\left\vert \sum_{m=p+1}^{n+p}\left[ pB_{m}-ma_{m}\right]
r^{m}e^{im\theta }+\sum_{m=n+p+1}^{\infty }c_{m}r^{m}e^{im\theta
}\right\vert ^{2}d\theta .
\end{eqnarray*}%
Using now the Parseval's inequality, we obtain%
\begin{equation*}
4\left( p-\gamma \right) ^{2}r^{2p}+\sum_{m=p+1}^{n+p-1}\left\vert
ma_{m}+\left( p-2\gamma \right) B_{m}\right\vert
^{2}r^{2m}>\sum_{m=p+1}^{n+p}\left\vert ma_{m}-pB_{m}\right\vert
^{2}r^{2m}+\sum_{m=n+p+1}^{\infty }\left\vert c_{m}\right\vert ^{2}r^{2m}.
\end{equation*}%
Letting $r\rightarrow 1$ in this inequality, we get%
\begin{equation*}
\sum_{m=p+1}^{n+p}\left\vert ma_{m}-pB_{m}\right\vert ^{2}\leq 4\left(
p-\gamma \right) ^{2}+\sum_{m=p+1}^{n+p-1}\left\vert ma_{m}+\left( p-2\gamma
\right) B_{m}\right\vert ^{2}.
\end{equation*}%
Hence we deduce that%
\begin{equation*}
\left\vert \left( n+p\right) a_{n+p}-pB_{n+p}\right\vert ^{2}-4\left(
p-\gamma \right) ^{2}\leq 2\left( p-\gamma \right)
\sum_{m=p+1}^{n+p-1}\left\{ 2m\left\vert a_{m}B_{m}\right\vert +\left(
\left\vert p-2\gamma \right\vert +p\right) \left\vert B_{m}\right\vert
^{2}\right\} ,
\end{equation*}%
and thus we obtain the inequality $\left( \ref{2.7}\right) $, which
completes the proof of Theorem $\ref{thm4}$.
\end{proof}

\noindent \textbf{Remark }$\mathbf{4}$\textbf{.} Letting $p=1$ in Theorem $%
\ref{thm4}$, we have \cite[Theorem 3]{seker}.

\begin{thm}
\label{thm5}If $f\in \mathcal{K}_{s}^{(k)}\left( \gamma ,p\right) $, then
for $\left\vert z\right\vert =r$\ $(0\leq r<1)$, we have

$(i)$%
\begin{equation}
\frac{\left[ p-\left( p-2\gamma \right) r\right] r^{p-1}}{\left( 1+r\right)
^{2p+1}}\leq \left\vert {f}^{\prime }(z)\right\vert \leq \frac{\left[
p+\left( p-2\gamma \right) r\right] r^{p-1}}{\left( 1-r\right) ^{2p+1}},
\label{2.10}
\end{equation}

$(ii)$%
\begin{equation}
\int_{0}^{r}\frac{\left[ p-\left( p-2\gamma \right) \tau \right] \tau ^{p-1}%
}{\left( 1+\tau \right) ^{2p+1}}d\tau \leq \left\vert f(z)\right\vert \leq
\int_{0}^{r}\frac{\left[ p+\left( p-2\gamma \right) \tau \right] \tau ^{p-1}%
}{\left( 1-\tau \right) ^{2p+1}}d\tau ,  \label{2.11}
\end{equation}
\end{thm}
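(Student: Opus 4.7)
The plan is to combine the subordination from Theorem~\ref{thm1} with the distortion theorem for $\mathcal{S}_p^*$ (which applies to $G_k(z):=g_k(z)/z^{(k-1)p}$ thanks to Lemma~\ref{lemma1}). Recall that for any $F\in\mathcal{S}_p^*$ one has the classical bounds
\[
\frac{r^p}{(1+r)^{2p}} \;\le\; |F(z)| \;\le\; \frac{r^p}{(1-r)^{2p}} \qquad (|z|=r).
\]
Theorem~\ref{thm1} gives $zf'(z)/G_k(z) \prec (p+(p-2\gamma)z)/(1-z)$, so the first task is to bound the modulus of the subordinate quotient and the second is to multiply by the distortion bound for $|G_k|$.

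For part (i), observe that the M\"obius image of $|z|\le r$ under $\Theta(z)=(p+(p-2\gamma)z)/(1-z)$ is a closed disk whose minimum modulus $(p-(p-2\gamma)r)/(1+r)$ is attained at $z=-r$ and whose maximum modulus $(p+(p-2\gamma)r)/(1-r)$ is attained at $z=r$; the lower numerator is nonnegative for every $r\in[0,1)$ because $0\le\gamma<p$. By subordination the same bounds apply to $|zf'(z)/G_k(z)|$. Multiplying the upper (respectively lower) bound on this quotient by the upper (respectively lower) distortion bound on $|G_k(z)|$, and then dividing by $|z|=r$, produces exactly (\ref{2.10}).

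Part (ii) splits into a routine upper bound and a more delicate lower bound. For the upper bound, with $z=re^{i\theta}$, parametrise the radial segment and apply the estimate from (i):
\[
|f(z)| \;\le\; \int_0^r |f'(\rho e^{i\theta})|\,d\rho \;\le\; \int_0^r \frac{[p+(p-2\gamma)\rho]\rho^{p-1}}{(1-\rho)^{2p+1}}\,d\rho.
\]
For the lower bound, let $z_0$ minimise $|f|$ on $|z|=r$ and set $m=|f(z_0)|$; Theorem~\ref{thm2} shows $f\in\mathcal{K}_p$, hence $f$ is $p$-valent and an open map. An elementary topological argument gives $\partial f(\{|z|<r\}) \subset f(\{|z|=r\})$, from which the disk $D(0,m)$ is contained in $f(\{|z|<r\})$. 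Lifting the straight segment $[0,f(z_0)]$ to a path $\Gamma\subset\overline{\mathbb{U}}$ starting at $0$ and ending at $z_0$, and writing $\rho(t)=|\zeta(t)|$ along a parametrisation of $\Gamma$, the pointwise inequality $|\zeta'(t)|\ge|\rho'(t)|$ together with $\rho(0)=0$, $\rho(1)=r$ and the area formula yields
\[
m \;=\; \int_\Gamma |f'(\zeta)|\,|d\zeta| \;\ge\; \int_0^1 L(\rho(t))|\rho'(t)|\,dt \;\ge\; \int_0^r L(\tau)\,d\tau,
\]
where $L(\tau)=[p-(p-2\gamma)\tau]\tau^{p-1}/(1+\tau)^{2p+1}$; the last inequality uses $L\ge 0$ and the fact that the intermediate value theorem forces $\rho$ to attain each value in $(0,r)$ at least once.

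The main obstacle is exactly this lifting step in the lower growth bound: one needs a lift of $[0,f(z_0)]$ that actually terminates on $|z|=r$ rather than at some interior preimage, which is genuinely more subtle for $p$-valent $f$ than for univalent $f$ because $f'$ may vanish in $\mathbb{U}$. The standard remedy is to continue the branch of $f^{-1}$ along the segment as far as possible and to verify, using the inclusion $D(0,m)\subset f(\{|z|<r\})$ together with $p$-valence, that the endpoint of the maximal lift must sit on $|z|=r$. Once this lifting is in place, the remainder of the argument is an immediate consequence of Theorem~\ref{thm1}, Lemma~\ref{lemma1}, and the classical distortion theorem for $\mathcal{S}_p^*$.
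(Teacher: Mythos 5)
Your argument is essentially the paper's: part (i) is the same combination of the distortion bounds for $G_{k}\in \mathcal{S}_{p}^{\ast }$ (Lemma \ref{lemma1} plus Aouf's theorem) with the bound on the quotient $zf^{\prime }(z)/G_{k}(z)$ coming from the subordination of Theorem \ref{thm1}, and part (ii) is the same radial integration for the upper bound and the same ``minimum point $z_{0}$ plus lift of the segment $[0,f(z_{0})]$'' device for the lower bound. Where you differ is only in being explicit about the multivalence issue in the lifting step, which the paper passes over silently by writing $\Gamma =f^{-1}(\ell _{0})$ as if $f^{-1}$ were single valued.

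On that flagged step, however, your proposed remedy does not quite do the job as stated. Continuing a branch of $f^{-1}$ along the segment and asking that ``the endpoint of the maximal lift sit on $|z|=r$'' is the wrong target: a lift started from the origin end may perfectly well terminate at an \emph{interior} preimage of $f(z_{0})$, and the inclusion $D(0,m)\subset f(\{|z|<r\})$ together with mere $p$-valence does not exclude this (that inclusion says nothing about where the \emph{other} preimages lie). The clean repair is to lift in the opposite direction and to upgrade the inclusion to a counting statement: since $|f|\geq m>|w|$ on $|z|=r$, Rouch\'{e}'s theorem (or the argument principle) gives that every $w$ with $|w|<m$ is taken exactly $p$ times in $|z|<r$, and hence, by the $p$-valence of $f$ in $\mathbb{U}$ (Theorem \ref{thm2}), \emph{nowhere} in $r\leq |z|<1$. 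Now start the lift at $z_{0}$ (where $f^{\prime }(z_{0})\neq 0$, because $\Re (zf^{\prime }/G_{k})>\gamma \geq 0$ forces $f^{\prime }\neq 0$ off the origin) and run backwards along the segment toward $w=0$: the lifted point is always a preimage of a value of modulus $<m$, so it is trapped in $|z|<r$ and cannot escape to $\partial \mathbb{U}$; the only preimage of $w=0$ is $z=0$, so the lift terminates at the origin. Thus $\rho (t)=|\zeta (t)|$ runs continuously from $r$ down to $0$, and your Banach-indicatrix estimate then yields $m\geq \int_{0}^{r}L(\tau )\,d\tau $ exactly as claimed. With this substitution your sketch becomes a complete (indeed more careful than the paper's) proof.
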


\begin{proof}
If $f\in \mathcal{K}_{s}^{(k)}\left( \gamma ,p\right) $, then there exists a
function $g\in \mathcal{S}_{p}^{\ast }\left( \frac{\left( k-1\right) p}{k}%
\right) $ such that $\left( \ref{1.4}\right) $ holds.

\noindent $(i)$ From Lemma $\ref{lemma1}$ it follows that the function $%
G_{k} $ given by $\left( \ref{2.1}\right) $ is $p$-valently starlike
function. Hence from \cite[Theorem 1]{A} we have%
\begin{equation}
\frac{r^{p}}{\left( 1+r\right) ^{2p}}\leq \left\vert G_{k}(z)\right\vert
\leq \frac{r^{p}}{\left( 1-r\right) ^{2p}}\qquad \left( \left\vert
z\right\vert =r\;(0\leq r<1)\right) .  \label{2.13}
\end{equation}%
Let us define $\Psi (z)$ by%
\begin{equation*}
\Psi (z):=\frac{{zf}^{\prime }(z)}{{G_{k}(z)}}\qquad \left( z\in \mathbb{U}%
\right) .
\end{equation*}%
Then by using a similar method \cite[p. 105]{G}, we have%
\begin{equation}
\frac{p-\left( p-2\gamma \right) r}{1+r}\leq \left\vert \Psi (z)\right\vert
\leq \frac{p+\left( p-2\gamma \right) r}{1-r}\qquad \left( z\in \mathbb{U}%
\right) .  \label{2.14}
\end{equation}%
Thus from $\left( \ref{2.13}\right) $ and $\left( \ref{2.14}\right) $, we
get the inequalities $\left( \ref{2.10}\right) $.

\noindent $(ii)$ Let $z=re^{i\theta }\;(0<r<1)$. If $\ell $ denotes the
closed line-segment in the complex $\zeta $-plane from $\zeta =0$ and $\zeta
=z$, i.e. $\ell =\left[ 0,re^{i\theta }\right] $, then we have%
\begin{equation*}
f(z)=\int_{\ell }f^{\prime }(\zeta )d\zeta =\int_{0}^{r}f^{\prime }(\tau
e^{i\theta })e^{i\theta }d\tau \qquad \left( \left\vert z\right\vert
=r\;(0\leq r<1)\right) .
\end{equation*}%
Thus, by using the upper estimate in $\left( \ref{2.10}\right) $, we have%
\begin{equation*}
\left\vert f(z)\right\vert =\left\vert \int_{\ell }f^{\prime }(\zeta )d\zeta
\right\vert \leq \int_{0}^{r}\left\vert f^{\prime }(\tau e^{i\theta
})\right\vert d\tau \leq \int_{0}^{r}\frac{\left[ p+\left( p-2\gamma \right)
\tau \right] \tau ^{p-1}}{\left( 1-\tau \right) ^{2p+1}}d\tau \qquad \left(
\left\vert z\right\vert =r\;(0\leq r<1)\right) ,
\end{equation*}%
which yields the right-hand side of the inequality in $\left( \ref{2.11}%
\right) $. In order to prove the lower bound in $\left( \ref{2.11}\right) $,
let $z_{0}\in \mathbb{U}$ with $\left\vert z_{0}\right\vert =r\;(0<r<1)$,
such that%
\begin{equation*}
\left\vert f(z_{0})\right\vert =\min \left\{ \left\vert f(z)\right\vert
:\left\vert z\right\vert =r\right\} .
\end{equation*}%
It is sufficient to prove that the left-hand side inequality holds for this
point $z_{0}$. Moreover, we have%
\begin{equation*}
\left\vert f(z)\right\vert \geq \left\vert f(z_{0})\right\vert \qquad \left(
\left\vert z\right\vert =r\;(0\leq r<1)\right) .
\end{equation*}%
The image of the closed line-segment $\ell _{0}=\left[ 0,f(z_{0})\right] $
by $f^{-1}$ is a piece of arc $\Gamma $ included in the closed disk $\mathbb{%
U}_{r}$ given by%
\begin{equation*}
\mathbb{U}_{r}=\left\{ z:z\in \mathbb{C}\quad \text{and}\quad \left\vert
z\right\vert \leq r\;(0\leq r<1)\right\} ,
\end{equation*}%
that is, $\Gamma =f^{-1}\left( \ell _{0}\right) \subset \mathbb{U}_{r}$.
Hence, in accordance with $\left( \ref{2.10}\right) $, we obtain%
\begin{equation*}
\left\vert f(z_{0})\right\vert =\int_{\ell _{0}}\left\vert dw\right\vert
=\int_{\Gamma }\left\vert {f}^{\prime }(\zeta )\right\vert \left\vert d\zeta
\right\vert \geq \int_{0}^{r}\frac{\left[ p-\left( p-2\gamma \right) \tau %
\right] \tau ^{p-1}}{\left( 1+\tau \right) ^{2p+1}}d\tau .
\end{equation*}%
This finishes the proof of the inequality $\left( \ref{2.11}\right) $.
\end{proof}

\noindent \textbf{Remark }$\mathbf{5}$\textbf{.} Letting $p=1$ in Theorem $%
\ref{thm5}$, we have \cite[Theorem 4]{seker}.

\begin{thm}
\label{thm6}Let $0\leq \gamma _{2}\leq \gamma _{1}<p.$ Then we have%
\begin{equation*}
\mathcal{K}_{s}^{(k)}\left( \gamma _{1},p\right) \subset \mathcal{K}%
_{s}^{(k)}\left( \gamma _{2},p\right) .
\end{equation*}
\end{thm}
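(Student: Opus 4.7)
The plan is to reduce the inclusion to the subordination characterization in Theorem \ref{thm1} and then pass between the two majorants via Lemma \ref{lemma3}. Note that the class of admissible $g$, namely $\mathcal{S}_{p}^{\ast}\!\left(\frac{(k-1)p}{k}\right)$, does not depend on $\gamma$, so the same $g$ can be used for both $\gamma_{1}$ and $\gamma_{2}$.

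First, I would take an arbitrary $f\in\mathcal{K}_{s}^{(k)}(\gamma_{1},p)$. By Theorem \ref{thm1}, there exists $g\in\mathcal{S}_{p}^{\ast}\!\left(\frac{(k-1)p}{k}\right)$ such that
\begin{equation*}
\frac{z^{(k-1)p+1}f^{\prime}(z)}{g_{k}(z)}\prec\frac{p+(p-2\gamma_{1})z}{1-z}.
\end{equation*}

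Next, I would establish the auxiliary subordination
\begin{equation*}
\frac{p+(p-2\gamma_{1})z}{1-z}\prec\frac{p+(p-2\gamma_{2})z}{1-z}.
\end{equation*}
To bring this into the form of Lemma \ref{lemma3}, divide numerator and denominator of each side by $p$ (equivalently, factor $p$ out front); each side takes the shape $p\cdot\frac{1+A_{i}z}{1+B_{i}z}$ with $A_{i}=\frac{p-2\gamma_{i}}{p}$ and $B_{i}=-1$. The hypothesis $0\le\gamma_{2}\le\gamma_{1}<p$ then yields
\begin{equation*}
-1=B_{2}=B_{1}<A_{1}=\tfrac{p-2\gamma_{1}}{p}\le\tfrac{p-2\gamma_{2}}{p}=A_{2}\le 1,
\end{equation*}
so Lemma \ref{lemma3} applies and gives the desired majorization after multiplication by the positive constant $p$.

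Finally, the transitivity of the subordination relation yields
\begin{equation*}
\frac{z^{(k-1)p+1}f^{\prime}(z)}{g_{k}(z)}\prec\frac{p+(p-2\gamma_{2})z}{1-z},
\end{equation*}
and one more application of Theorem \ref{thm1} (with the same $g$) gives $f\in\mathcal{K}_{s}^{(k)}(\gamma_{2},p)$, completing the inclusion. The only step that requires any care is verifying the hypotheses of Lemma \ref{lemma3}; everything else is immediate from Theorem \ref{thm1} together with transitivity of $\prec$. As an alternative to Lemma \ref{lemma3}, one could observe directly that $w\mapsto\frac{p+(p-2\gamma)z}{1-z}$ maps $\mathbb{U}$ univalently onto the half-plane $\{\Re w>\gamma\}$, and since $\{\Re w>\gamma_{1}\}\subset\{\Re w>\gamma_{2}\}$ when $\gamma_{2}\le\gamma_{1}$, the univalence-plus-inclusion criterion for subordination recalled in the introduction supplies the required majorization.
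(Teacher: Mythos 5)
Your proposal is correct and follows essentially the same route as the paper: apply Theorem \ref{thm1} to get the subordination with majorant $\frac{p+(p-2\gamma_{1})z}{1-z}$, verify the hypotheses of Lemma \ref{lemma3} (exactly the inequality $-1<1-\tfrac{2\gamma_{1}}{p}\le 1-\tfrac{2\gamma_{2}}{p}\le 1$ the paper records), and conclude by transitivity of $\prec$ and a second application of Theorem \ref{thm1} with the same $g$. Your explicit remark that the class of admissible $g$ is independent of $\gamma$, and the alternative half-plane argument, are harmless refinements of the same proof.
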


\begin{proof}
Suppose that $f\in \mathcal{K}_{s}^{(k)}\left( \gamma _{1},p\right) $. By
Theorem $\ref{thm1}$, we have%
\begin{equation*}
\frac{{z^{\left( k-1\right) p+1}}f^{\prime }(z)}{{g_{k}(z)}}\prec \frac{%
p+\left( p-2\gamma _{1}\right) z}{1-z}\qquad \left( z\in \mathbb{U}\right) .
\end{equation*}%
Since $0\leq \gamma _{2}\leq \gamma _{1}<p$, we\ get%
\begin{equation*}
-1<1-\frac{2\gamma _{1}}{p}\leq 1-\frac{2\gamma _{2}}{p}\leq 1.
\end{equation*}%
Thus by Lemma $\ref{lemma3}$, we have%
\begin{equation*}
\frac{{z^{\left( k-1\right) p+1}}f^{\prime }(z)}{{g_{k}(z)}}\prec \frac{%
p+\left( p-2\gamma _{2}\right) z}{1-z}\qquad \left( z\in \mathbb{U}\right) ,
\end{equation*}%
that is $f\in \mathcal{K}_{s}^{(k)}\left( \gamma _{2},p\right) $. Hence the
proof is complete.
\end{proof}

\noindent \textbf{Remark }$\mathbf{6}$\textbf{.} Letting $p=1$ in Theorem $%
\ref{thm6}$, we have \cite[Theorem 5]{seker}.

\end{document}